\definecolor {processblue}{cmyk}{0.96,0,0,0}
\newtheorem{theorem}{Theorem}
\newtheorem{observation}[theorem]{Observation}
\newtheorem{problem}[theorem]{Problem}
\newtheorem{conjecture}[theorem]{Conjecture}
\newtheorem{proposition}[theorem]{Proposition}
\def\qed{\ifhmode\unskip\nobreak\fi\quad\ifmmode\Box\else$\Box$\fi}
\def\qed{\hfill $\Box$}
\author{Andr\'{e} E. K\'{e}zdy \\
	\small Department of Mathematics\\[-0.8ex]
	\small University of Louisville\\[-0.8ex]
	\small Louisville, Kentucky, U.S.A.\\[-0.8ex]
	\small\tt kezdy@louisville.edu\\
	\\
	Jen\H{o} Lehel \\
\small Department of Mathematics\\[-0.8ex]
	\small University of Louisville\\[-0.8ex]
	\small Louisville, Kentucky, U.S.A.\\
                \small{and}\\          
           \small  Alfr\'ed R\'enyi Institute of Mathematics\\[-0.8ex] 
            \small  Budapest, Hungary\\[-0.8ex]
              	\small\tt lehelj@renyi.hu}  
\begin{document}
\title{The equivalence of the Szemer\'edi and Petruska conjecture  
and the maximum order of \\$3$-uniform $\tau$-critical  hypergraphs}
\maketitle

\begin{abstract}
Recently we \cite{KL} asymptotically resolved the long-standing Szemer\'edi and Petruska conjecture \cite{SzP}.  
Several decades ago Gy\'arf\'as et al. \cite{GYLT} observed, via a 
straightforward but unpublished argument, that this conjecture is
equivalent to the problem of determining the maximum order of a $3$-uniform $\tau$-critical hypergraph.  
Consequently, an asymptotically tight upper bound for the maximum order of a $3$-uniform $\tau$-critical hypergraph follows 
from our recent work, reawakening interest in this equivalence.

In this companion paper we supply a simple proof of this equivalence.  We also present related background with open problems, and mention  combinatorial geometry applications of the Szemer\'edi and Petruska conjecture.
\end{abstract} 
\section{Introduction}
The two extremal problems indicated in the title are essentially the same through a straightforward complementary argument observed by Gy\'arf\'as et al. \cite{GYLT} decades ago.  The long-standing Szemer\'edi and Petruska conjecture \cite{SzP}  was recently resolved asymptotically by K\'ezdy and Lehel \cite{KL}.  As a corollary, an asymptotically tight upper bound follows for the maximum order of a $3$-uniform $\tau$-critical hypergraph. Weaker bounds were given earlier by using the theory of $\tau$-critical hypergraphs.  In contrast, K\'ezdy and Lehel \cite{KL} apply the iterative technique introduced by Szemer\'edi and Petruska; the iterative private pair technique there is reconsidered, substantially refined, then ultimately combined with the skew version of Bollob\'as theorem \cite{BB} on cross-intersecting set pair systems.  This strategy asymptotically resolves the Szemer\'edi and Petruska conjecture. 
The success of this approach has reawakened interest in the equivalence of the two problems mentioned in the title.  
This paper completes its companion \cite{KL} by supplying a simple proof of this equivalence.

In Section \ref{arrow} we begin by recalling the Hajnal-Folkman  lemma \cite{Hajnal} which can be considered the forefather of all the extremal problems considered here, including the very general family of `arrow symbol' problems introduced by Erd\H{o}s \cite{E}. A special case of these latter problems was investigated by Szemer\'edi and Petruska \cite{SzP} leading to their conjecture, see Section \ref{r=3} (Conjecture \ref{conj}). Sections \ref{comp} and \ref{taucritical} show that the two problems mentioned in the title are equivalent for general $r$-uniform hypergraphs (Proposition \ref{same}). It is important to emphasize that these ideas are not original. The observation that the two problems are equivalent goes back to the early work of Gy\'arf\'as et al. \cite{GYLT} and Tuza \cite{T}. The equivalence was exploited, for $r=3$, with a combinatorial geometry application by Jobson et al. \cite[Lemma 3]{JKL}. We conclude the note with open problems in Section \ref{quest}. In particular, we propose a question that generalizes the Hajnal-Folkman lemma, interpreting it as a problem on the maximum cliques of $r$-uniform hypergraphs (Problem \ref{HF}).
 
\section{The Hajnal-Folkman lemma}
\label{arrow}
  The Hajnal-Folkman  lemma states (see \cite{E,Hajnal}):
 {If a graph has  at most $2k-1$ vertices, where $k$ is the  maximum clique size, then
its  maximum cliques  share a common vertex.} Generalizing this lemma to set systems, Erd\H{o}s \cite{E}
introduced  an ``arrow notation'' for a class of extremal problems which we now describe. 

For $1\leq \ell$ and $3\leq r\leq k\leq n$, let  $\mathcal{K}=\{N_1,\dots, N_\ell\}$  be a family of sets containing at least $k$ elements and let  $\big|\bigcup\limits_{i=1}^\ell N_i\big|=n$. The family
$\mathcal{K}$ generates an $r$-uniform hypergraph $H$ on vertex set $V=\bigcup\limits_{i=1}^\ell N_i$ such that an
 $r$-element subset $R\subset V$  is an edge of $H$ if and only if  $R\subset N_i$, for some $N_i\in\mathcal{K}$. In particular, each $N_i\in\mathcal{K}$ becomes a complete $r$-uniform subhypergraph of $H$ called a `clique'. 
 Erd\H{o}s's arrow symbol $(n,k,t)^r\rightarrow u$ denotes the claim that  for every $\mathcal{K}$, if the sets of $\mathcal{K}$ have no $t$-element transversal (a $t$-set meeting each $N_i$), then the hypergraph generated by $\mathcal{K}$ contains a $u$-clique (a clique with $u$ vertices).\footnote{ To emphasize fixed $r$ we abbreviated  the syntax $(n,k,t,r)\rightarrow u$   introduced by  Erd\H{o}s \cite{E} to  $(n,k,t)^r\rightarrow u$.}
 
 Accordingly, the form $(n,k,t)^r\not\rightarrow u$
 means: there exists a family $\mathcal{K}=\{N_1,\dots, N_\ell\}$ as above having no $t$-transversal, and such that the 
 $r$-uniform hypergraph $H$ generated by $\mathcal{K}$ has $n$ vertices and contains no $u$-clique. 
 One can consider this hypergraph $H$ as a `witness' for  $(n,k,t)^r\not\rightarrow u$. 
 
Referring to the negative form of the arrow symbol for $u=k+1$ and $t=1$, an  $r$-uniform hypergraph
 $H$ of order $n$ is defined to be an $r$-uniform {\it $(n,k)$-witness hypergraph} (a witness for $(n,k,1)^r\not\rightarrow k+1$) provided its clique number $\omega(H)=k$ and the $k$-cliques of $H$ have no common vertex. 
 
The Hajnal-Folkman lemma states that $(2k-1,k,1)^2\rightarrow k+1$; consequently, a witness graph for $(n,k,1)^2\not\rightarrow k+1$ has $n\geq 2k$ vertices. Rewriting this in terms of $m=n-k$, the lemma says that the  order of an $(n,k)$-witness graph is at most $2m$, which bound is actually tight.
The Szemer\'edi and Petruska conjecture concerns the case $r=3$. It
 states that, in terms of $m=n-k$, the maximum order of a $3$-uniform  $(n,k)$-witness hypergraph is ${m+2\choose 2}$.
 
\section{Complementarity}
\label{comp}
Here all definitions assume a fixed positive integer $r \ge 2$.  Let $H=(V,E)$ be an $r$-uniform  hypergraph.
The {\it independence number} $\alpha(H)$  and the {\it transversal number} $\tau(H)$ of $H$  are defined as
\begin{eqnarray*}
\alpha(H)&=&max\{|S| : S\subset V, \hbox{ if } R\subset S, |R|=r, \hbox{ then } R\not\in E\}\\
 \tau(H)&=&min \{|T| : T\subset V,  \;e\cap T\neq \varnothing \hbox{ for each } e\in E\}.
 \end{eqnarray*}

A set $T\subset V$ such that   $e\cap T\neq \varnothing$  for each  $e\in E$  is called a {\it transversal} of $H$. 
A transversal $T$ such that $|T|=\tau(H)$ is a {\it minimum  transversal} of $H$.
The {\it clique number}  of $H$ is defined to be 
$$\omega(H)=max\{|N| : N\subset V, \hbox{ if } R\subset N, |R|=r, \hbox{ then } R\in E\}.$$
Notice that, by definition, if $T$ is a minimum transversal, then $S=V\setminus T$ is a largest independent set. Therefore, we obtain $\tau(H)=|V|-\alpha(H)$, a hypergraph extension of one of the graph identities
due to  Gallai \cite{G}.
 
 Define 
$\widehat{H}=(V,\widehat{E})$, where $\widehat{E}$ 
contains all $r$-element subsets of $V$ not in $E$. 
Obviously, a clique of $H$ is an independent set in $\widehat{H}$, furthermore,
$\widehat{\widehat{H}}=H$. Summarizing these complementarity properties we obtain:
\begin{observation}\label{gallai}
If $H$ is an $r$-uniform hypergraph of order $n$, then $$n=\alpha(H)+\tau(H)=\omega(\widehat{H})+\tau(H)=\omega(H)+\tau(\widehat{H}).$$
\end{observation}

\section{$\tau$-critical hypergraphs}
\label{taucritical}
A hypergraph $H=(V,E)$ is
{\it $\tau$-critical} if it has no isolated vertex ($\bigcup\limits_{e\in E} e= V)$ and  $\tau(H-e)=\tau(H)-1$, where
$H-e$ is the partial hypergraph with vertex set $V$ and edge set $E\setminus{e}$. 
\begin{observation}
\label{vertexinout}
In an $r$-uniform $\tau$-critical hypergraph ($r\geq 2$),  for every vertex $v$
there is a minimum transversal containing $v$, and
there is a minimum transversal not containing $v$.\qed
\end{observation}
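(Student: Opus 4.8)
The plan is to use only two ingredients: that a $\tau$-critical hypergraph has no isolated vertex (so every vertex lies in some edge), and that $\tau(H-e)=\tau(H)-1$ for every edge $e$. Write $\tau=\tau(H)$ and fix an arbitrary vertex $v$; since $H$ has no isolated vertex, choose an edge $e$ with $v\in e$. Both assertions will come from comparing $H$ with the partial hypergraph $H-e$, whose transversal number is $\tau-1$.

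For the first assertion, I would take a minimum transversal $T$ of $H-e$, so $|T|=\tau-1$, and consider $T\cup\{v\}$. It meets every edge of $H-e$ because $T$ does, and it meets $e$ because $v\in e$; hence $T\cup\{v\}$ is a transversal of $H$ and so has size at least $\tau$. Since $|T\cup\{v\}|\le|T|+1=\tau$, equality holds, so $T\cup\{v\}$ is a minimum transversal of $H$ that contains $v$.

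For the second assertion, again let $T$ be a minimum transversal of $H-e$ with $|T|=\tau-1$. The first step is to note that $T\cap e=\varnothing$: otherwise $T$ would already meet $e$, hence be a transversal of all of $H$ of size $\tau-1<\tau$, which is impossible; in particular $v\notin T$. Since $r\ge 2$, the edge $e$ has at least two vertices, so I can pick $w\in e$ with $w\ne v$. Then $T\cup\{w\}$ meets the edges of $H-e$ via $T$ and meets $e$ via $w$, so it is a transversal of $H$ of size at most $|T|+1=\tau$, hence a minimum transversal; and it avoids $v$ because $v\notin T$ and $v\ne w$.

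I do not expect a genuine obstacle here; the only subtlety is the degenerate possibility that $v$ lies in every edge, so that one cannot pick an edge \emph{missing} $v$. The argument above sidesteps this by never needing such an edge: it uses a single edge through $v$ together with the hypothesis $r\ge 2$, which already supplies a second vertex of that edge to use in place of $v$. This is the standard folklore argument for $\tau$-critical hypergraphs.
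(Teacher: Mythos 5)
Your argument is correct and is precisely the standard folklore proof that the paper leaves implicit (the observation is stated with no written proof): both minimum transversals are obtained from a minimum transversal $T$ of $H-e$ for an edge $e\ni v$, using $T\cap e=\varnothing$ and $r\ge 2$ to choose $w\in e\setminus\{v\}$. Nothing to add.
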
 

A hypergraph $H=(V,E)$ is {\it vertex critical} if every $v\in V$ belongs to some minimum transversal
of $H$. Notice that, due to Observation \ref{vertexinout}, for fixed transversal number $t$ the family of
$\tau$-critical hypergraphs is contained in the family of vertex critical hypergraphs. 
\begin{proposition} [Gy\'arf\'as et al. \cite{GYLT}]
\label{critical}
A hypergraph $H=(V,E)$ is vertex critical if and only if every $\tau$-critical partial hypergraph $H^\prime=(V^\prime, E^\prime)$ defined by $E^\prime\subset E$, 
$V^\prime=\cup \{e\mid e\in E^\prime\}$ with  $\tau(H^\prime)=\tau(H)$  satisfies $V^\prime=V$.\qed
\end{proposition}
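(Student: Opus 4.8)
The plan is to prove both implications directly from the definition of a transversal, leaning on a single auxiliary observation: \emph{every hypergraph contains a $\tau$-critical partial hypergraph (on its set of non-isolated vertices) with the same transversal number}. This follows by greedy ``edge peeling'' — repeatedly delete any edge whose removal does not lower $\tau$; when the process halts, the surviving edge set $E'$ has the property that deleting any of its edges drops $\tau$, and on $V'=\bigcup\{e:e\in E'\}$ there are no isolated vertices, so $(V',E')$ is $\tau$-critical with $\tau(V',E')=\tau(H)$. (Here one uses that $\tau$ depends only on the edge set, not on which extra vertices are declared present.)

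For the ``only if'' direction, assume $H$ is vertex critical and let $H'=(V',E')$ be any $\tau$-critical partial hypergraph with $E'\subseteq E$, $V'=\bigcup\{e:e\in E'\}$, and $\tau(H')=\tau(H)=:t$. Suppose for contradiction that some $v\in V\setminus V'$. Since $H$ is vertex critical, $v$ lies in a minimum transversal $T$ of $H$ with $|T|=t$. Because $E'\subseteq E$, the set $T$ is a transversal of $H'$; but no edge of $E'$ contains $v$ (as $v\notin V'$), so $T\setminus\{v\}$ is still a transversal of $H'$, of size $t-1<t=\tau(H')$ — a contradiction. Hence $V'=V$.

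For the ``if'' direction, assume every $\tau$-critical partial hypergraph of the stated form covers $V$, and fix $v\in V$. Let $H_v$ be the partial hypergraph of $H$ whose edges are exactly the edges of $H$ avoiding $v$. Then $\tau(H_v)\le\tau(H)=t$, and also $\tau(H)\le\tau(H_v)+1$ since adjoining $v$ to any transversal of $H_v$ gives a transversal of $H$; thus $\tau(H_v)\in\{t-1,t\}$. If $\tau(H_v)=t$, apply the edge-peeling observation to $H_v$ to get a $\tau$-critical partial hypergraph $H'=(V',E')$ with $E'\subseteq E(H_v)\subseteq E$, $V'=\bigcup\{e:e\in E'\}$, and $\tau(H')=t=\tau(H)$; every edge of $E'$ avoids $v$, so $v\notin V'$ and $V'\ne V$, contradicting the hypothesis. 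Therefore $\tau(H_v)=t-1$, and adjoining $v$ to a minimum transversal of $H_v$ yields a minimum transversal of $H$ containing $v$. As $v$ was arbitrary, $H$ is vertex critical.

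I do not expect a genuine obstacle here; the proposition is essentially a bookkeeping exercise. The two points requiring care are: (i) phrasing the edge-peeling observation so that the object it produces literally meets the proposition's hypotheses — in particular that $V'$ is declared to be exactly the covered vertex set, so that it has no isolated vertices and the notions of $\tau$-critical and vertex-critical are applied on the correct ground set; and (ii) checking the degenerate situations ($v$ isolated in $H$, or $H_v$ edgeless, or $H$ edge-free), which are harmless — e.g.\ when $H$ has an isolated vertex both sides of the equivalence fail — once one notes that transversal numbers are insensitive to isolated vertices.
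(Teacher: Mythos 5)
Your proof is correct. Note that the paper states Proposition~\ref{critical} without proof (it is quoted from Gy\'arf\'as et al.\ \cite{GYLT} and closed with a tombstone immediately after the statement), so there is no argument in the text to compare against; your write-up supplies the missing details. The structure you chose is the natural one: the ``only if'' direction is immediate once you observe that a minimum transversal of $H$ through a vertex $v\notin V'$ restricts to a transversal of $H'$ from which $v$ can be deleted, beating $\tau(H')$; the ``if'' direction correctly reduces vertex criticality at $v$ to showing $\tau(H_v)=\tau(H)-1$ for the partial hypergraph $H_v$ of edges missing $v$, using edge peeling to manufacture a $\tau$-critical partial hypergraph avoiding $v$ in the bad case. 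You rightly isolate the one ingredient that needs stating, namely that every hypergraph contains a $\tau$-critical partial hypergraph, on the union of its surviving edges, with the same transversal number; and your remark that the degenerate situations (isolated vertices, $\tau=0$) make both sides of the equivalence fail or hold together is exactly the right sanity check. I see no gap.
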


 Szemer\'edi and Petruska \cite{SzP} proved a bound on the maximum order of $3$-uniform $(n,k)$-witness hypergraphs.
 Gy\'arf\'as et al. \cite{GYLT} investigated the maximum order, $v_{max}(r,t)$, of an $r$-uniform $\tau$-critical hypergraph $H$ with $\tau(H)=t$. It was observed in \cite{GYLT}
 that these two extremal problems are essentially the same, so both works address the same function $g(r,t)$ using different techniques. 
 Here is a proof of this equivalence.
\begin{proposition}
\label{same}
 $v_{max}(r,t)\leq g(r,t)$  for some function $g(r,t)$ if and only if   each $r$-uniform $(n,k)$-witness hypergraph with  $k=n-t$ satisfies  $n\leq g(r,t)$.
\end{proposition}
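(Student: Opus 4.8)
The plan is to route the equivalence through the intermediate notion of a vertex-critical hypergraph, exploiting the complementation identity of Observation~\ref{gallai}.

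First I would set up a complementation dictionary. For any $r$-uniform hypergraph $H$ of order $n$, recall that $\widehat{\widehat H}=H$, that the cliques of $H$ are exactly the independent sets of $\widehat H$, and that Observation~\ref{gallai} gives $\omega(H)=n-\tau(\widehat H)$; hence $\omega(H)=k$ with $k=n-t$ holds precisely when $\tau(\widehat H)=t$. Next, the $k$-cliques of $H$ are exactly the maximum independent sets of $\widehat H$, and complementation inside $V$ carries them bijectively onto the minimum transversals of $\widehat H$; so the $k$-cliques of $H$ share a common vertex if and only if some vertex of $\widehat H$ lies outside every minimum transversal. Putting these together, $H$ is an $r$-uniform $(n,k)$-witness hypergraph with $k=n-t$ if and only if $\widehat H$ is a vertex-critical $r$-uniform hypergraph of order $n$ with $\tau(\widehat H)=t$, and since $\widehat{\widehat H}=H$ this correspondence is an involution. (Degenerate cases with $k<r$ force $H$ to be complete and are harmless, as any bound $g(r,t)$ worth proving dominates such tiny orders; alternatively one restricts to $k\ge r$ as in the arrow-symbol setup.)

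Second, I would show that $v_{max}(r,t)$ equals the maximum order of an $r$-uniform vertex-critical hypergraph with transversal number $t$. One inequality is immediate from Observation~\ref{vertexinout} and the remark after it, since every $\tau$-critical hypergraph is vertex-critical with the same transversal number. For the other, starting from a vertex-critical $H$ with $\tau(H)=t$ I would delete edges one at a time while the transversal number stays equal to $t$; the resulting partial hypergraph $H'$, restricted to its non-isolated vertices, is $\tau$-critical with $\tau(H')=t$, and Proposition~\ref{critical} then forces $H'$ to span all of $V(H)$, so $H'$ has the same order as $H$.

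Finally I would combine the two steps. For the forward implication, assuming $v_{max}(r,t)\le g(r,t)$: any $(n,k)$-witness hypergraph $H$ with $k=n-t$ has $\widehat H$ vertex-critical of order $n$ with $\tau(\widehat H)=t$, so $n\le v_{max}(r,t)\le g(r,t)$. For the converse, assuming every such witness hypergraph has $n\le g(r,t)$: any $r$-uniform $\tau$-critical hypergraph $H$ with $\tau(H)=t$ and order $N$ is vertex-critical, so the dictionary applied to $\widehat H$ (with $\widehat{\widehat H}=H$) shows $\widehat H$ is an $(N,N-t)$-witness hypergraph, giving $N\le g(r,t)$; maximizing over $H$ yields $v_{max}(r,t)\le g(r,t)$. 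The one genuinely delicate point will be the second step: without Proposition~\ref{critical} one could not be sure that greedily peeling edges off a vertex-critical hypergraph leaves every vertex covered, and it is exactly this fact that keeps the two extremal orders in lockstep. By contrast, matching ``a common vertex of the $k$-cliques'' with ``a vertex avoided by all minimum transversals'' in the first step is routine once the complementation identity is in hand.
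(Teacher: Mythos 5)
Your proposal is correct and follows essentially the same route as the paper: complementation via Observation~\ref{gallai}, the identification of witness hypergraphs with vertex-critical complements, and Proposition~\ref{critical} to pass from vertex-critical to spanning $\tau$-critical partial hypergraphs. The only differences are organizational (you isolate the equality of the two extremal orders as an explicit intermediate step and argue the converse directly rather than by contradiction), so nothing substantive changes.
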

\begin{proof}
Let $H=(V,E)$ be an $r$-uniform $(n,k)$-witness hypergraph
with $|V|=n$, and $\omega(H)=k=n-t$. By Observation \ref{gallai}, 
$k=\omega(H)=\alpha(\widehat{H})=n-\tau(\widehat{H}),
$ hence $ \tau(\widehat{H})=n-k=t$.
Since the $k$-cliques of $H$
have no common vertex, every $x\in V$ belongs to the complement of some $k$-clique of $H$, that is to a minimum transversal  of $\widehat{H}$. Therefore $\widehat{H}$ is vertex critical.
 Due to Proposition \ref{critical}, $\widehat{H}$ has an $r$-uniform $\tau$-critical partial hypergraph spanning $V$. Thus
 $n=|V|\leq v_{max}(r,t)\leq g(r,t)$ follows.
 
For the converse, assume to the contrary that
$H$ is an $r$-uniform  $\tau$-critical hypergraph of order 
$n=v_{max}(r,t)>g(r,t)$.  
By Observation \ref{gallai},  $\tau(H)=t$ implies 
$\omega(\widehat{H})=n-t$.
By Observation \ref{vertexinout}, the minimum transversals of $H$ have no common vertex and their union covers $V$, therefore,
the union of the $k$-cliques of  $\widehat{H}$ covers $V$ and these $k$-cliques
have no common vertex.
In other words, $\widehat{H}$ is an $r$-uniform $(n,k)$-witness hypergraph, where $k=n-t$. Thus we obtain
$g(r,t)<v_{max}(r,t)=|V|=n\leq g(r,t)$, a contradiction.
\end{proof}

  \section{The case  $r=3$}
  \label{r=3}
Szemer\'edi and Petruska \cite{SzP} obtained
the estimation $n\leq 8t^2+ 3t$ for  the order of a $3$-uniform $(n,k)$-witness hypergraph, which is equivalent with 
 $v_{max}(3,t)\leq 8t^2+ 3t$, by Proposition \ref{same}. They gave a lower bound construction and made the tight conjecture
 that can be phrased as follows.
\begin{conjecture}[Szemer\'edi and Petruska \cite{SzP}]
\label{conj}
For $n=k+t$, if $H$ is a $3$-uniform $(n,k)$-witness hypergraph, then $n\leq  {t+2\choose 2}$.
\end{conjecture}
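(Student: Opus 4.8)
The plan is to work in the complementary setting justified by Proposition~\ref{same}: proving the conjectured bound $n \le \binom{t+2}{2}$ for every $3$-uniform $(n,k)$-witness hypergraph with $k=n-t$ is equivalent to proving $v_{max}(3,t) \le \binom{t+2}{2}$. So I would actually prove a statement about $3$-uniform $\tau$-critical hypergraphs $H=(V,E)$ with $\tau(H)=t$, showing $|V| \le \binom{t+2}{2}$. This reformulation is the one that makes the iterative machinery available, so the first step is simply to invoke Observation~\ref{gallai} and Proposition~\ref{same} to switch sides.

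Second, I would set up the Szemer\'edi--Petruska iterative decomposition of the $(n,k)$-witness hypergraph (equivalently, run the ``private pair'' iteration on the $\tau$-critical complement). Starting from the family $\mathcal{K}$ of $k$-cliques with empty common intersection, one repeatedly selects a clique and a vertex in it that is ``private'' in the current subfamily, removing a controlled number of vertices at each stage; after $t$ rounds one is left with a core on which the structure is essentially a sunflower or a small configuration. The bookkeeping produces a sum $\sum_{i} (\text{something like } a_i)$ over the $t$ stages, and the worst case should be when these contributions are balanced, giving a quadratic-in-$t$ total of the form $\binom{t+2}{2} = 1 + 2 + \cdots + (t+1)$.

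Third — and this is where the real content lives — I would bring in the skew Bollob\'as theorem on cross-intersecting set-pair systems \cite{BB}, exactly as in the companion paper \cite{KL}. The refined private-pair analysis associates to each stage a pair of sets $(A_i, B_i)$ that are cross-intersecting in the skew sense, and Bollob\'as's inequality $\sum_i \binom{|A_i|+|B_i|}{|A_i|}^{-1} \le 1$ (or its skew variant) bounds the number and sizes of these pairs. Converting that inequality into the extremal bound on $|V|$ is the crux: one must show that the constraint forces the per-stage vertex counts to be small enough that they sum to at most $\binom{t+2}{2}$, with equality characterized by the Szemer\'edi--Petruska lower-bound construction.

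I expect the main obstacle to be precisely closing the gap between the asymptotic bound of \cite{KL} and the exact constant $\binom{t+2}{2}$. The iterative estimate combined with skew Bollob\'as yields $\binom{t+2}{2}(1+o(1))$, but extracting the exact value requires either a sharpening of the private-pair selection (choosing pairs so the cross-intersecting system is as large as possible, forcing tightness of Bollob\'as) or a separate case analysis for the bounded-stage ``remainder'' that the asymptotic argument discards. Handling that low-order remainder term — the small core hypergraph left after the $t$ iterations — without losing an additive constant is the delicate part; if it cannot be absorbed cleanly, the conjecture in its exact form remains open, which is consistent with the excerpt presenting it as a \textbf{conjecture} rather than a theorem.
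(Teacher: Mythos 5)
This statement is a \emph{conjecture}; the paper does not prove it, and neither does your proposal. Your first step is sound: by Proposition~\ref{same} (via Observation~\ref{gallai}), the bound $n\le\binom{t+2}{2}$ for $3$-uniform $(n,k)$-witness hypergraphs with $k=n-t$ is indeed equivalent to $v_{max}(3,t)\le\binom{t+2}{2}$ for $3$-uniform $\tau$-critical hypergraphs, and that is the correct bridge between the two formulations. But everything after that is a description of the strategy of \cite{KL}, which yields only $v_{max}(3,t)=\binom{t+2}{2}+\mathrm{O}(t^{5/3})$. The gap you would need to close is not an ``additive constant'' or a ``low-order remainder that might be absorbed cleanly'' --- the error term $\mathrm{O}(t^{5/3})$ grows without bound in $t$, so no amount of careful bookkeeping of a bounded core will convert the asymptotic argument into the exact inequality. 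You acknowledge this yourself in your final sentence, which is the honest conclusion: the exact bound is open (verified only for $t=2,3,4$ in \cite{small}), and what you have written is a research program, not a proof.

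One smaller point: the direction of the reduction matters less than you suggest. The private-pair iteration of Szemer\'edi and Petruska runs on the witness-hypergraph side (the family $\mathcal{K}$ of maximum cliques), while the skew Bollob\'as theorem is applied to set-pair systems extracted from that iteration; the $\tau$-critical reformulation is not what ``makes the iterative machinery available'' so much as what connects the problem to the independent line of bounds of Gy\'arf\'as et al.\ and Tuza. Either side is a legitimate place to work, but switching sides does not by itself supply the missing sharpening.
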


 The upper bound was improved  by Gy\'arf\'as et al.  \cite{GYLT}  to $v_{max}(3,t)\leq 2t^2+t$, and later by Tuza\footnote{ Zs. Tuza, personal communication ($2019$).} to $v_{max}(3,t) \leq \frac{3}{4}t^2+t+1$.  We \cite{KL} recently proved that the Szemer\'edi and Petruska conjecture is asymptotically correct, which, due to Proposition \ref{same}, immediately implies the asymptotically tight bound $v_{max}(3,t) = {t+2\choose 2}+ {\rm O}(t^{5/3})$. 
 
 The  Szemer\'edi and Petruska conjecture was verified for $t=2,3$ and $4$ by Jobson et al. \cite{small}. The resolution of   the conjecture has applications in combinatorial geometry problems related to a question posed by Petruska and another one by Eckhoff  involving convex sets in the plane, see
  \cite{JKLPT,JKL}.
  
 \section{Further problems}
\label{quest}
The early results due to Szemer\'edi and Petruska  \cite{SzP} motivated further research and lead to unexpected applications.
 In the introduction to their 1972 paper (that is titled part I.) they write,  `` ...[for larger $r$] we get a more general problem and we are to return to it in a forthcoming paper''.  As we learned\footnote{ Gy. Petruska, personal communication ($2018$).} a few years ago, they never revisited this work.  So the innovative iterative approach introduced in their paper was never extended for $r>3$;  the technique was forgotten for a while. Their conjecture survived a decade later in a different setting as a problem on $\tau$-critical hypergraphs. 

Gy\'arf\'as et al. \cite{GYLT} proved general bounds  pertaining to $r$-uniform hypergraphs: $$ {t+r-2\choose r-1}+(t+r-2)\leq v_{max}(r,t)\leq t^{r-1} +t {t+r-2\choose r-2}.$$
The question was asked there whether the correct value  for $t\geq r$ (or the asymptotic) of $v_{max}(r,t)$  is the lower bound above  (cf. Tuza \cite[Problem 18]{T}).

\begin{problem}
\label{anyr}  Is
 $  v_{max}(r,t)= {t+r-2\choose r-1}+(t+r-2)$ ($t\geq r$)
  true (or true asymptotically)?
\end{problem}

We conclude the note by returning to the Hajnal-Folkman lemma, and extend it from graphs to a problem on hypergraphs. 
The original form of the lemma according to Hajnal \cite{Hajnal} is as follows. If $k$ is the size of the maximum cliques in a graph on $n$ vertices, and 
$\{N_1,\dots, N_\ell\}$ is the family of its $k$-cliques, then $\big|\bigcap\limits_{i=1}^\ell N_i\big|\geq 2k-n$.

\begin{problem} 
\label{HF}
If $k$ is the size of the maximum cliques in an $r$-uniform hypergraph on $n=k+m$ vertices, and 
$\{N_1,\dots, N_\ell\}$ is the family of its $k$-cliques, then
$$
\left|\bigcap\limits_{i=1}^\ell N_i\right|  \geq n- \left[{m+r-2\choose r-1}+(m+r-2)\right]?
$$
\end{problem}
Notice that for $r=2$ the solution of Problem \ref{HF} is the Hajnal-Folkman lemma; for $r=3$ it is open, and implies the Szemer\'edi and Petruska conjecture.


\end{document}